\documentclass{amsart}

\usepackage[T1]{fontenc}
\usepackage[utf8]{inputenc}
\usepackage[english]{babel}
\usepackage{amsmath,amssymb,amsthm,mathtools}
\usepackage{xcolor}
\usepackage{tikz}
\usepackage{float}
\usepackage{verbatim}
\usepackage{hyperref}

\numberwithin{equation}{section}

\newtheorem{theorem}{Theorem}[section]
\newtheorem{lemma}[theorem]{Lemma}
\newtheorem{definition}[theorem]{Definition}

\newtheorem{proposition}[theorem]{Proposition}

\newtheorem{question}[theorem]{Question}

\newtheorem{corollary}[theorem]{Corollary}
\theoremstyle{remark}

\newtheorem{remark}[theorem]{Remark}

\DeclareMathOperator{\supp}{supp}

\DeclareMathOperator{\topo}{top}

\DeclareMathOperator{\Per}{Per}

\begin{document}

\title[Approximation by zero-entropy systems]{Invariant measures with full support and approximation by zero-entropy systems in the $C^0$-Gromov--Hausdorff topology}

\author{Jorge Crisóstomo}
\address{Facultad de Ciencias Matemáticas, UNMSM, Lima, Perú}
\email{jcrisostomop@unmsm.edu.pe}
\thanks{}

\author{Richard Cubas}
\address{Universidad Científica del Sur, Lima, Perú}
\curraddr{}
\email{cubas.mat@usp.br}
\thanks{}

\subjclass[2025]{Primary 37D30}
\keywords{$C^0$-Gromov--Hausdorff topology, topological entropy, invariant measure, full support, periodic approximation, $GH$ topological stability}
\date{}

\begin{abstract}
In this paper we prove that every homeomorphism of a compact metric space admitting an invariant probability measure with full support can be approximated in the $C^0$-Gromov--Hausdorff topology by homeomorphisms with zero topological entropy. The argument relies on the ergodic decomposition theorem and on the existence of points with dense positive orbit in the supports of suitable ergodic components. As a consequence, topological entropy is not stable under $C^0$-Gromov--Hausdorff perturbations within this class. We also show that if, in addition, the homeomorphism is topologically $GH$-stable, then its periodic points are dense in the ambient space. Finally, by combining this framework with a previous result on transitive and topologically $GH$-stable homeomorphisms, we deduce that every dynamics in this class admits an invariant measure with full support and therefore falls within the scope of the general approximation theorem by zero-entropy systems.
\end{abstract}

\maketitle

\section{Introduction}

Topological stability is one of the central notions in Dynamical Systems for describing the qualitative persistence of a dynamics under perturbations. Roughly speaking, a topologically stable system preserves its dynamical behavior, at least at the level of semi-conjugacy or conjugacy, under sufficiently small perturbations when these are considered on the same phase space. In the classical $C^0$ topology, this notion has been extensively studied and is closely related to fundamental properties such as expansiveness, shadowing, and topological conjugacy theory \cite{walters1970,walters1978,katokhasselblatt1995,robinson1999}.

A more flexible framework was introduced by Arbieto and Morales in \cite{arbieto2017topological}, who defined a $C^0$-Gromov--Hausdorff distance between dynamical systems possibly defined on different compact metric spaces. This distance combines the uniform metric between maps with the Gromov--Hausdorff distance between metric spaces \cite{gromov2007,burago2001}, and allows one to compare dynamics even when the phase spaces do not coincide. Within this context, the authors also introduced the notion of $GH$ topological stability, thereby extending the classical concept of topological stability to a more general setting \cite{arbieto2017topological}.

From this viewpoint, several questions have arisen concerning the dynamical structure induced by the $C^0$-Gromov--Hausdorff topology. One of the most natural is to determine when a given dynamics can be approximated, in the $GH^0$ sense, by simpler systems. In this direction, it was proved in \cite{cubas2018propriedades} that every topologically transitive homeomorphism of a compact metric space can be approximated in the $C^0$-Gromov--Hausdorff topology by periodic dynamics. Later, Jung \cite{jung2019closure} obtained, under additional assumptions, converse results and related characterizations in terms of chain transitivity. Complementary results and a systematic exposition of this framework can also be found in the monograph of Lee and Morales \cite{lee2022gromov}.

The purpose of this paper is to revisit this approximation problem from an ergodic viewpoint. Instead of assuming topological transitivity as the main hypothesis, we assume that the homeomorphism admits an invariant probability measure with full support. This condition is natural from the standpoint of ergodic theory and, at the same time, sufficiently robust to produce finite approximating models \cite{walters1982,viana2016foundations}. In particular, we show that this hypothesis allows one to replace the existence of a globally dense orbit by a finite family of orbit blocks associated with suitably selected ergodic components.

Our first main result states that every homeomorphism of a compact metric space admitting an invariant probability measure with full support can be approximated in the $C^0$-Gromov--Hausdorff topology by homeomorphisms with zero topological entropy.

\begin{theorem}
Let $f:X\to X$ be a homeomorphism of a compact metric space. If $f$ admits an invariant probability measure with full support, then for every $\delta>0$ there exist a compact metric space $Y_\delta$ and a homeomorphism $g_\delta:Y_\delta\to Y_\delta$ such that
\[
d_{GH^0}(f,g_\delta)<\delta
\qquad\text{and}\qquad
h_{\topo}(g_\delta)=0.
\]
\end{theorem}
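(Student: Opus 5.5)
We construct a finite metric space $Y_\delta$ and a permutation $g_\delta$ of it. Any homeomorphism of a finite space has zero topological entropy, since it is periodic. The whole task is therefore to make $d_{GH^0}(f,g_\delta)<\delta$. Recall that this requires an $\delta$-isometry $i:X\to Y$ and $j:Y\to X$ that are mutually close, together with $d(g\circ i,i\circ f)<\delta$ and $d(f\circ j, j\circ g)<\delta$ (or the paper's equivalent formulation). By the Arbieto--Morales definition it suffices to find a finite $\delta$-dense set $Y\subset X$ and a bijection $g:Y\to Y$ with $d(g(y),f(y))<\delta$ for all $y\in Y$. We then equip $Y$ with the restricted metric, take $j$ to be the inclusion, and let $i$ send each point to a nearest point of $Y$. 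The estimate for $i$ follows from uniform continuity of $f$ once $Y$ is $\eta$-dense for some $\eta\ll\delta$.

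\textbf{Step 1 (finding good orbits).} Fix $\eta>0$, chosen so that $d(x,x')<\eta$ implies $d(f x,f x')<\delta/3$, and cover $X$ by finitely many open balls $B_1,\dots,B_N$ of radius $\eta$. Let $\mu$ be the invariant measure with full support. By the ergodic decomposition, $\mu=\int \mu_x\,d\mu(x)$. Since $\mu(B_k)>0$, for each $k$ there is a set of positive measure of ergodic components $\nu$ with $\nu(B_k)>0$. For an ergodic $\nu$, $\nu$-a.e.\ point has dense forward orbit in $\supp\nu$, and it is also recurrent. So for each $k$ we pick an ergodic $\nu_k$ with $\nu_k(B_k)>0$ and a point $x_k\in B_k\cap\supp\nu_k$ that is forward recurrent and whose forward orbit is dense in $\supp\nu_k$. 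We do not need a single dense orbit in $X$; $N$ recurrent orbits, one through each ball, are enough.

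\textbf{Step 2 (closing the orbit blocks).} Recurrence gives, for each $k$, a time $n_k\ge1$ with $d(f^{n_k}(x_k),x_k)<\eta$. Consider the block $O_k=\{x_k,f(x_k),\dots,f^{n_k-1}(x_k)\}$, and define $g$ on it by $g(f^i x_k)=f^{i+1}x_k$ for $i<n_k-1$ and $g(f^{n_k-1}x_k)=x_k$. Then $d(g(y),f(y))<\eta$ on $O_k$. The union of the blocks meets every $B_k$, so it is $2\eta$-dense.

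\textbf{Step 3 (making the blocks disjoint).} This is where I expect the main obstacle: two blocks may overlap, or may be the same orbit. If $x_l$ lies on the forward orbit of $x_k$, or if two blocks share points, $g$ is not a well-defined bijection on the union. My first approach is to process the balls inductively. If some point of an earlier chosen block already lies in $B_k$, we skip $B_k$. Otherwise we choose $x_k$ off the finitely many previously chosen orbits. This is possible because either $\nu_k$ is non-atomic, in which case full-measure choices avoid countable sets, or $\nu_k$ is supported on a periodic orbit, which is then already a closed block. There remains the possibility that a later orbit passes through points of earlier blocks. To handle this, I would choose the $x_k$ generically: each condition of the form "the orbit of $x_k$ hits a given finite set" has $\nu_k$-measure zero unless $\nu_k$ is atomic, and in the atomic case the orbit is periodic and equal to its own block, so it coincides with or is disjoint from the others. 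Blocks of distinct non-periodic generic points are then disjoint.

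Finally, $Y=\bigcup_k O_k$ is finite and $2\eta$-dense, and $g$ is a permutation within $\eta$ of $f$ on $Y$. With $\eta$ small relative to $\delta$, this gives $d_{GH^0}(f,g_\delta)<\delta$ and $h_{\topo}(g_\delta)=0$.
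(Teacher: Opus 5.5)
Your proof is correct. Its skeleton matches the paper's: an ergodic component charging each small ball, a recurrent orbit segment through that ball closed into a cycle, and a finite permutation with zero entropy. The genuine difference is how you deal with overlapping blocks.

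\textbf{How the paper handles overlaps.} It never makes the blocks disjoint. It takes $Y_\delta=\{(i,k):1\le i\le r,\ 0\le k\le N_i\}$ as an abstract disjoint union and maps it to $X$ by $q(i,k)=f^k(a_i)$. It then uses the metric $\rho(u,v)=d(q(u),q(v))+\alpha$ for $u\neq v$. Points, or whole orbits, that coincide in $X$ simply become distinct points of $Y_\delta$ at distance $\alpha$. The cost is an extra $\alpha$ of distortion in both $\Delta$-isometries, and no atom analysis or genericity is needed.

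\textbf{How you handle overlaps, and what it buys.} You keep $Y\subset X$ with the induced metric, which gives a sharper object: $j$ is an isometric inclusion, and $g_\delta$ is a genuine permutation of a finite $2\eta$-dense subset of $X$ that is $\eta$-close to $f|_Y$. You pay for this with two facts:
\begin{itemize}
\item an ergodic measure of a homeomorphism is either non-atomic or equidistributed on a single periodic orbit;
\item for finite $F$, the set $\{x:\mathcal O_f^+(x)\cap F\neq\emptyset\}=\bigcup_{n\ge0}f^{-n}(F)$ is countable.
\end{itemize}
Your case analysis does close the argument: skip a ball already met, otherwise choose $x_k$ off this countable set, and note that periodic blocks are equal to or disjoint from every other block.

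\textbf{Two side remarks.} First, your closing step uses only recurrence, never the density of $\mathcal O_f^+(x_k)$ in $\supp\nu_k$. So in your route the ergodic decomposition is dispensable. Poincar\'e recurrence for $\mu$ gives recurrent points in every ball, and the atoms of an invariant measure of a homeomorphism lie on periodic orbits. The same avoidance argument therefore runs with $\mu$ itself, using either its non-atomic part or one of its periodic atoms in each ball.

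Second, a bookkeeping point. Since $Y$ is only $2\eta$-dense, choose $\eta$ so that $d(x,x')<2\eta$ implies $d(f(x),f(x'))<\delta/3$, and, say, $\eta<\delta/9$. With your stated choice (modulus of continuity at scale $\eta$), the bound $d(g(i(x)),i(f(x)))<\eta+\delta/3+2\eta$ for the nearest-point map $i$ is not yet justified.
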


The proof is based on the ergodic decomposition theorem. The full-support hypothesis makes it possible to select, at an arbitrarily fine spatial scale, a finite family of ergodic components whose supports cover the space up to a small error. On each of these supports we choose a point with dense positive orbit and, from sufficiently long segments of such orbits, we construct a finite system by closing each block into a cycle. In this way one obtains a finite dynamics which is $GH^0$-close to the original one. Since every continuous map on a finite space has zero topological entropy, the desired approximation follows.

This result shows, in particular, that topological entropy is not stable under $C^0$-Gromov--Hausdorff perturbations, even within the class of homeomorphisms admitting invariant measures with full support. In this sense, the $GH^0$ topology exhibits a markedly different behavior from that of the classical $C^0$ topology.

Our second main result concerns topologically $GH$-stable homeomorphisms. Using a periodic version of the previous construction, we prove that the combination of $GH$ topological stability with a natural ergodic hypothesis imposes a strong periodic structure on the dynamics.

\begin{theorem}
Let $f:X\to X$ be a topologically $GH$-stable homeomorphism of a compact metric space. If $f$ admits an invariant probability measure with full support, then
\[
\overline{\Per(f)}=X.
\]
\end{theorem}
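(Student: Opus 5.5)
We prove the second theorem by combining the periodic approximation from the proof of the first theorem with the definition of $GH$ topological stability. Recall this definition from \cite{arbieto2017topological}. For every $\epsilon>0$ there is $\delta>0$ such that, whenever $g:Y\to Y$ is a homeomorphism of a compact metric space with $d_{GH^0}(f,g)<\delta$, there is a continuous $\epsilon$-isometry $h:Y\to X$ satisfying $d(h\circ g, f\circ h)<\epsilon$ on $Y$. The plan is to take $g$ to be the finite cyclic system built in the proof of the first theorem, and to push its periodic structure to $X$ via $h$.

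Fix $x\in X$ and $\epsilon>0$; we must find a periodic point of $f$ in $B(x,C\epsilon)$ for some universal constant $C$. Let $\delta$ be the stability constant associated to $\epsilon$. The first theorem gives a finite space $Y_\delta$ and a homeomorphism $g_\delta$ with $d_{GH^0}(f,g_\delta)<\delta$. Its construction yields more than zero entropy: $Y_\delta$ is a finite union of cycles obtained by closing long orbit segments of points with dense positive orbit in supports of ergodic components. Those supports cover $X$ up to a small error, so $Y_\delta$ is $\delta$-dense in $X$ under the Gromov--Hausdorff embedding. Stability then gives an $\epsilon$-isometry $h:Y_\delta\to X$ with $d(h(g_\delta(y)),f(h(y)))<\epsilon$. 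Because $h$ is an $\epsilon$-isometry and $Y_\delta$ is $\delta$-dense, $h(Y_\delta)$ is roughly $(\epsilon+\delta)$-dense in $X$. Choose $y$ with $h(y)$ close to $x$. Since every point of $Y_\delta$ is $g_\delta$-periodic, say $g_\delta^n(y)=y$, the points $h(g_\delta^k(y))$ for $k=0,\dots,n-1$ form a periodic $\epsilon$-pseudo-orbit of $f$ through a point near $x$.

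The key step, and the main obstacle, is turning this periodic pseudo-orbit into a true periodic point of $f$ near $x$. Stability alone only gives semiconjugacy-type information, and $h$ maps into $X$ rather than the other way. So I will first establish that a topologically $GH$-stable homeomorphism has the shadowing property; this is an expected consequence of the Arbieto--Morales framework, and I will take it from \cite{arbieto2017topological} if available or reprove it. I will also use that a $GH$-stable homeomorphism is expansive, or at least that shadowing orbits are unique at small scales. Shadowing then gives a point $z$ whose orbit $C\epsilon$-traces the periodic pseudo-orbit. Since the pseudo-orbit is $n$-periodic, $f^n(z)$ also shadows it, and uniqueness of shadowing forces $f^n(z)=z$. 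Thus $z$ is a periodic point with $d(z,x)\le C\epsilon$.

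If expansivity is not available in this generality, I will use a fallback. Repeat the argument with $\epsilon_k\to0$ and a single cycle: the shadowing points $z_k$ satisfy $d(f^{n_k}z_k,z_k)\to0$. I would then try to replace the global-orbit step with the finite model itself, taking the stability conjugacy $h$ restricted to the cycle and using that $h\circ g_\delta^{n}=h$. This requires comparing $h\circ g$ with $f\circ h$ inductively, which is the delicate point. Either route gives $x\in\overline{\Per(f)}$, and since $x$ was arbitrary, $\overline{\Per(f)}=X$.
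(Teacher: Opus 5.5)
There is a genuine gap, and it comes from misquoting the definition. In this paper (following Arbieto--Morales), topological $GH$-stability yields a continuous $\varepsilon$-isometry $h:Y\to X$ satisfying the \emph{exact} relation $f\circ h=h\circ g$. It does not merely give $d(h\circ g,f\circ h)<\varepsilon$.

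With your weakened version you only get periodic $\varepsilon$-pseudo-orbits. Turning these into a genuine periodic point then depends on two properties you neither prove nor cite precisely:
\begin{itemize}
\item that $GH$-stable homeomorphisms have the shadowing property;
\item that shadowing orbits are unique at the relevant scale, i.e.\ expansivity with constant larger than twice the shadowing error. This is what forces $f^n(z)=z$.
\end{itemize}
Neither property is supplied by the hypotheses, and neither is used anywhere in the paper. Your fallback is explicitly left unfinished. Even if completed, it would only give points $z_k$ near $x$ with $d(f^{n_k}(z_k),z_k)\to 0$. That is a weak recurrence statement, which already follows from the full-support measure via Poincar\'e recurrence, and it is far from periodicity.

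The missing idea is simply to use the semi-conjugacy as an identity:
\begin{itemize}
\item Let $\eta$ be the stability constant for $\varepsilon$.
\item Apply the construction of Theorem~\ref{thm:main} (Proposition~\ref{prop:periodic_version}) at scale $\eta$. This gives a finite $Y_\eta$ on which every point is $g_\eta$-periodic, and a continuous $\varepsilon$-isometry $h$ with $f\circ h=h\circ g_\eta$.
\item If $g_\eta^m(y)=y$, then $f^m(h(y))=h(g_\eta^m(y))=h(y)$. Hence $h(Y_\eta)\subset\Per(f)$.
\item The bound $d_H(h(Y_\eta),X)<\varepsilon$ is part of the definition of an $\varepsilon$-isometry. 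You do not need any density of the orbit blocks in $X$ for this.
\end{itemize}
Thus $\Per(f)$ is $\varepsilon$-dense for every $\varepsilon>0$, with no shadowing or expansivity involved.
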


This result may be interpreted as a manifestation, in the $GH^0$ context, of the general principle according to which dynamical stability forces recurrence and periodic approximation. As an immediate consequence, we obtain that minimal homeomorphisms cannot be topologically $GH$-stable under the hypotheses considered here.

Moreover, we show that the presence of dense periodic points allows one to construct invariant measures with full support. By combining this observation with a previous result from \cite{cubas2018propriedades} on the density of periodic points in the transitive and topologically $GH$-stable case, we deduce that every transitive and topologically $GH$-stable homeomorphism admits an invariant probability measure with full support. In particular, the general approximation theorem by zero-entropy systems also applies to this class.

The conceptual novelty of the present work lies in the fact that the approximation is not derived from a global hypothesis of topological transitivity, but rather from a hypothesis of ergodic nature. More precisely, the argument shows that the existence of an invariant measure with full support suffices to reconstruct, at small scales, enough recurrence distributed throughout the whole space to produce finite approximating models.

The paper is organized as follows. In Section 2 we collect the definitions and preliminary results concerning the $C^0$-Gromov--Hausdorff distance, topological entropy, invariant measures, and ergodic decomposition. In Section 3 we prove the main approximation theorem by zero-entropy systems and present a periodic version of the construction. In Section 4 we apply this scheme to the case of topologically $GH$-stable homeomorphisms, deduce the density of periodic points, and obtain an additional consequence for transitive topologically $GH$-stable homeomorphisms. Finally, in Section 5 we discuss the ergodic interpretation of the results and formulate some open questions.

\section{Preliminaries}

In this section we fix notation and collect the concepts and results that will be used in the subsequent proofs. We restrict ourselves to the tools that are strictly necessary for the development of the paper.

\subsection{The $C^0$-Gromov--Hausdorff distance}

Let $(X,d)$ be a compact metric space. If $A,B\subset X$ are nonempty subsets, the Hausdorff distance between $A$ and $B$ is defined by
\[
d_H(A,B)=\max\left\{\sup_{a\in A}d(a,B),\sup_{b\in B}d(A,b)\right\},
\]
where
\[
d(a,B)=\inf_{b\in B}d(a,b),
\qquad
d(A,b)=\inf_{a\in A}d(a,b).
\]
For the basic theory of compact metric spaces and the Hausdorff distance, see \cite{burago2001,gromov2007}.

We now recall the notion of a $\Delta$-isometry, which allows one to compare metric spaces that are not necessarily isometric.

\begin{definition}
Let $(X,d^X)$ and $(Y,d^Y)$ be compact metric spaces, and let $\Delta>0$. A map, not necessarily continuous, $i:X\to Y$ is called a \emph{$\Delta$-isometry} if
\[
\max\left\{
d_H(i(X),Y),\;
\sup_{x,x'\in X}\left|d^Y(i(x),i(x'))-d^X(x,x')\right|
\right\}<\Delta.
\]
\end{definition}

Based on this notion, the Gromov--Hausdorff distance between compact metric spaces is defined as follows \cite{gromov2007,burago2001}.

\begin{definition}
The Gromov--Hausdorff distance between two compact metric spaces $X$ and $Y$ is defined by
\[
d_{GH}(X,Y)
=
\inf\left\{
\Delta>0:\; \exists\ \Delta\text{-isometries } i:X\to Y,\ j:Y\to X
\right\}.
\]
\end{definition}

If $u,v:Z\to W$ are maps defined on the same compact metric space $(Z,d^Z)$ and taking values in the same metric space $(W,d^W)$, we define their uniform distance by
\[
d_{C^0}(u,v)=\sup_{z\in Z} d^W\bigl(u(z),v(z)\bigr).
\]
When $u$ and $v$ are continuous, this coincides with the usual $C^0$ distance.

The following definition, introduced by Arbieto and Morales in \cite{arbieto2017topological}, combines the uniform distance between maps with the Gromov--Hausdorff distance between metric spaces.

\begin{definition}
Let $f:X\to X$ and $g:Y\to Y$ be continuous maps defined on compact metric spaces $X$ and $Y$, respectively. The $C^0$-Gromov--Hausdorff distance between $f$ and $g$ is defined by
\[
d_{GH^0}(f,g)
=
\inf\Big\{
\Delta>0:\; \exists\ \Delta\text{-isometries } i:X\to Y,\ j:Y\to X
\]
\[
\hspace{2.4cm}\text{ such that }
d_{C^0}(g\circ i,i\circ f)<\Delta
\quad\text{and}\quad
d_{C^0}(j\circ g,f\circ j)<\Delta
\Big\}.
\]
\end{definition}

This notion makes it possible to compare dynamical systems defined on different metric spaces and provides the natural framework for studying approximation phenomena in the dynamical Gromov--Hausdorff sense \cite{arbieto2017topological,lee2022gromov}.

\begin{definition}
Let $\mathcal C$ be a family of continuous maps defined on compact metric spaces. We say that a continuous map $f:X\to X$ is \emph{$C^0$-Gromov--Hausdorff approximated} by elements of $\mathcal C$ if there exists a sequence $(g_n)_{n\in\mathbb N}\subset \mathcal C$ such that
\[
\lim_{n\to\infty}d_{GH^0}(f,g_n)=0.
\]
\end{definition}

In particular, as we shall see later, the topology induced by $d_{GH^0}$ allows one to approximate a dynamics defined on a compact space by dynamics defined on finite spaces.

\subsection{Topological entropy}

Let $f:X\to X$ be a continuous map defined on a compact metric space $(X,d)$. We recall the definition of topological entropy in terms of separated sets; see, for instance, \cite{walters1982,katokhasselblatt1995,robinson1999}. Given $n\in\mathbb N$ and $\delta>0$, a finite subset $E\subset X$ is said to be \emph{$(n,\delta)$-separated} if for every $x,y\in E$, with $x\neq y$, one has
\[
\max_{0\leq j\leq n-1} d\big(f^j(x),f^j(y)\big)\geq \delta.
\]

We denote by
\[
s_n(f,\delta)
=
\sup\left\{\#E:\; E\subset X \text{ is an } (n,\delta)\text{-separated set}\right\}.
\]

\begin{definition}
The topological entropy of $f$ is defined by
\[
h_{\topo}(f)
=
\lim_{\delta\to 0}
\left(
\limsup_{n\to\infty}\frac{1}{n}\log s_n(f,\delta)
\right).
\]
\end{definition}

The only entropy property that we shall use explicitly is the following elementary fact.

\begin{lemma}\label{lem:finite_entropy_zero}
If $Y$ is a finite set and $g:Y\to Y$ is a map, then
\[
h_{\topo}(g)=0.
\]
\end{lemma}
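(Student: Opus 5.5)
We prove Lemma~\ref{lem:finite_entropy_zero} directly from the definition via separated sets. Endow $Y$ with any metric (a finite metric space is automatically compact, and every map on it is continuous). Let $N=\#Y$. The key observation is that any $(n,\delta)$-separated set $E\subset Y$ is, in particular, a subset of $Y$, so $\#E\leq N$ for every $n\in\mathbb N$ and every $\delta>0$. Hence
\[
s_n(g,\delta)\leq N \qquad\text{for all } n\in\mathbb N,\ \delta>0 .
\]

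From this uniform bound we get
\[
0\leq \frac{1}{n}\log s_n(g,\delta)\leq \frac{\log N}{n}\longrightarrow 0 \quad (n\to\infty),
\]
where the lower bound uses $s_n(g,\delta)\geq 1$ (any singleton is trivially $(n,\delta)$-separated, and $Y\neq\emptyset$). Thus $\limsup_{n\to\infty}\frac1n\log s_n(g,\delta)=0$ for every $\delta>0$, and letting $\delta\to0$ gives $h_{\topo}(g)=0$.

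There is no real obstacle here. The only points to check are that $s_n$ is finite and positive, so the logarithm makes sense, and that the bound by $\#Y$ does not depend on $n$. Both are immediate. We do not even need to use the dynamics of $g$: the cardinality bound alone forces exponential growth rate zero.
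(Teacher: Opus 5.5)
Your proof is correct and follows the same route as the paper: the uniform bound $s_n(g,\delta)\le \#Y$ forces $\frac1n\log s_n(g,\delta)\to 0$, and one then lets $\delta\to0$. Your remark that $s_n(g,\delta)\ge 1$ spells out a step the paper leaves implicit. It guarantees that the logarithm is defined and that the $\limsup$ is exactly $0$ rather than merely $\le 0$.
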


\begin{proof}
Since $Y$ is finite, for every $n\in\mathbb N$ and every $\delta>0$ one has
\[
s_n(g,\delta)\leq \#Y.
\]
Therefore,
\[
\frac{1}{n}\log s_n(g,\delta)\leq \frac{1}{n}\log(\#Y).
\]
Taking the limit superior as $n\to\infty$, we obtain
\[
\limsup_{n\to\infty}\frac{1}{n}\log s_n(g,\delta)=0.
\]
Finally, letting $\delta\to 0$, we conclude that
\[
h_{\topo}(g)=0.
\]
\end{proof}

\subsection{Invariant measures and ergodic decomposition}

Let $f:X\to X$ be a homeomorphism defined on a compact metric space $X$. We say that a Borel probability measure $\mu$ on $X$ is \emph{invariant} under $f$ if
\[
\mu(f^{-1}(E))=\mu(E)
\]
for every measurable set $E\subset X$.

\begin{definition}
The support of a Borel probability measure $\mu$ on $X$ is defined by
\[
\supp(\mu)=\left\{x\in X:\; \mu(U)>0 \text{ for every open neighborhood } U \text{ of } x\right\}.
\]
We say that $\mu$ has \emph{full support} if
\[
\supp(\mu)=X.
\]
\end{definition}

\begin{definition}
An invariant measure $\mu$ is said to be \emph{ergodic} if for every measurable set $A\subset X$ such that $f^{-1}(A)=A$, one has
\[
\mu(A)\in\{0,1\}.
\]
\end{definition}

The basic theory of invariant measures, the Krylov--Bogolyubov theorem, Birkhoff's ergodic theorem, and ergodic decomposition may be found in \cite{walters1982,viana2016foundations}. We shall use the following ergodic decomposition theorem.

\begin{theorem}[Ergodic decomposition]\label{thm:ergodic_decomposition}
Let $X$ be a compact metric space and let $f:X\to X$ be a homeomorphism. If $\mu$ is an $f$-invariant probability measure, then there exist a measurable set $X_0\subset X$ with $\mu(X_0)=1$, a measurable partition $\mathcal P$ of $X_0$, and a family of probability measures $\{\mu_P\}_{P\in\mathcal P}$ such that:
\begin{itemize}
    \item[(i)] for $\hat{\mu}$-almost every class $P\in\mathcal P$, one has $\mu_P(P)=1$;
    \item[(ii)] for every measurable set $E\subset X$, the map
    \[
    P\mapsto \mu_P(E)
    \]
    is measurable;
    \item[(iii)] for $\hat{\mu}$-almost every class $P\in\mathcal P$, the measure $\mu_P$ is invariant and ergodic;
    \item[(iv)] for every measurable set $E\subset X$,
    \[
    \mu(E)=\int \mu_P(E)\,d\hat{\mu}(P).
    \]
\end{itemize}
\end{theorem}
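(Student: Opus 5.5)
The plan is to build the decomposition directly from Birkhoff's ergodic theorem applied to a countable dense family of continuous functions, rather than appealing to abstract Rokhlin disintegration. Since $X$ is compact metric, $C(X)$ is separable; I fix a countable dense set $\{\varphi_k\}_{k\in\mathbb N}\subset C(X)$. By Birkhoff's theorem, for each $k$ the limit
\[
\tilde\varphi_k(x)=\lim_{n\to\infty}\frac1n\sum_{j=0}^{n-1}\varphi_k(f^j(x))
\]
exists for $\mu$-a.e. $x$. I let $X_0$ be the set of points where all these limits exist, which satisfies $\mu(X_0)=1$. By density and uniform approximation, the limit of the time averages then exists for every $\varphi\in C(X)$ at each $x\in X_0$. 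It defines a positive normalized linear functional, so the Riesz representation theorem gives a probability measure $\mu_x$ with $\int\varphi\,d\mu_x=\tilde\varphi(x)$.

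Invariance of $\mu_x$ is immediate, since the time averages of $\varphi\circ f$ and $\varphi$ differ by $O(1/n)$. The set $X_0$ is $f$-invariant, and so is the map $x\mapsto\mu_x$. I define the partition $\mathcal P$ of $X_0$ by declaring $x\sim y$ iff $\mu_x=\mu_y$, equivalently $\tilde\varphi_k(x)=\tilde\varphi_k(y)$ for all $k$. This partition is measurable, being generated by the countably many measurable functions $\tilde\varphi_k$. I set $\mu_P=\mu_x$ for $x\in P$ and let $\hat\mu$ be the push-forward of $\mu$ under the quotient map $\pi:X_0\to\mathcal P$.

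For (iv), Birkhoff's theorem gives $\int\tilde\varphi\,d\mu=\int\varphi\,d\mu$, that is, $\int\!\big(\int\varphi\,d\mu_x\big)d\mu(x)=\int\varphi\,d\mu$ for all continuous $\varphi$. A monotone class argument then extends this identity to indicators of Borel sets. The same argument gives (ii): $x\mapsto\mu_x(E)$ is measurable for continuous integrands, hence for all Borel $E$.

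The main obstacle is (i) and (iii), namely showing that $\mu_x$ is ergodic and concentrated on the class of $x$ for $\mu$-a.e. $x$. The key estimate I would aim for is, for each $k$,
\[
\int\!\!\int\bigl|\tilde\varphi_k(y)-\tilde\varphi_k(x)\bigr|^2\,d\mu_x(y)\,d\mu(x)=0 .
\]
To prove it, I expand the square. The term $\int\!\int\tilde\varphi_k(y)^2\,d\mu_x(y)\,d\mu(x)$ equals $\int\tilde\varphi_k^2\,d\mu$ by (iv), extended to the bounded measurable function $\tilde\varphi_k^2$. The cross term $\int\tilde\varphi_k(x)\int\tilde\varphi_k\,d\mu_x\,d\mu(x)$ requires the identity $\int\tilde\varphi_k\,d\mu_x=\tilde\varphi_k(x)$. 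This holds because $\tilde\varphi_k$ is the $L^1(\mu_x)$-limit of time averages of $\varphi_k$, each of which has $\mu_x$-integral $\int\varphi_k\,d\mu_x$ by invariance; here Birkhoff is applied to $\mu_x$ itself, which is legitimate since $\mu_x$ is invariant. The whole expression then collapses to $0$.

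Hence, for $\mu$-a.e. $x$, we have $\tilde\varphi_k(y)=\tilde\varphi_k(x)$ for all $k$ and $\mu_x$-a.e. $y$. This gives $\mu_x(\pi^{-1}(\pi(x)))=1$, which is (i). It also shows that the time averages of every $\varphi_k$, and hence by density of every continuous $\varphi$, are $\mu_x$-a.e. constant. By the standard characterization of ergodicity through constancy of Birkhoff limits for a dense family in $C(X)$, which can be extended to $L^1$ by approximation, $\mu_x$ is ergodic, proving (iii).
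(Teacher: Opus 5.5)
\textbf{Comparison with the paper.} The paper does not prove this statement. It quotes it from the literature, in the formulation of Viana--Oliveira, where it is deduced from Rokhlin's disintegration theorem applied to essentially your partition of $X_0$ by the values of the time averages.

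Your proof is correct, and it runs in the opposite direction. In the disintegration proof, (i), (ii) and (iv) come for free from the abstract theorem, and the work lies in showing that $\hat\mu$-a.e.\ conditional measure is invariant and ergodic. In yours, the components are built explicitly as weak$^*$ limits $\mu_x=\lim_n\frac1n\sum_{j<n}\delta_{f^j(x)}$ via Birkhoff and Riesz. Invariance then holds for every class, and (ii), (iv) follow by a monotone class argument. The real content becomes the concentration property (i), which you extract from
$\int\mathrm{Var}_{\mu_x}(\tilde\varphi_k)\,d\mu(x)=\int\tilde\varphi_k^2\,d\mu-\int\tilde\varphi_k^2\,d\mu=0$, using $\int\tilde\varphi_k\,d\mu_x=\tilde\varphi_k(x)$. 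The ergodicity step is then essentially the same in both routes.

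\textbf{What each route buys.} Your argument is elementary and self-contained, with no disintegration or Choquet theory. It does not use invertibility of $f$. As a by-product it shows that $\mu$-a.e.\ point is generic for its own ergodic component, which is close to how the paper actually uses the decomposition together with Lemma~\ref{lem:dense_orbit_support}. The price is that it relies on compactness of $X$ and continuity of $f$: separability of $C(X)$, the Riesz representation, and $\varphi\circ f\in C(X)$ for invariance. The abstract disintegration is the tool that survives outside this setting.

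\textbf{Routine points to make explicit.} First, $\mu_x(X_0)=1$ for every $x\in X_0$, by Birkhoff applied to the invariant measure $\mu_x$, so that $\tilde\varphi_k$ is defined $\mu_x$-a.e. Second, the set $G=\{x\in X_0:\mathrm{Var}_{\mu_x}(\tilde\varphi_k)=0\text{ for all }k\}$ is measurable and saturated, so ``$\mu$-a.e.\ $x$'' becomes ``$\hat\mu$-a.e.\ $P$''.

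\textbf{A small simplification.} The density argument in (iii) can be bypassed. For $x\in G$ and $\mu_x$-a.e.\ $y$ one has $y\sim x$, hence $\tilde\varphi(y)=\int\varphi\,d\mu_y=\int\varphi\,d\mu_x$ for every $\varphi\in C(X)$ simultaneously.
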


The following standard fact will be fundamental in the construction of the approximating systems. We include it for completeness, since it is precisely the point at which ergodicity is translated into topological transitivity on the support.

\begin{lemma}\label{lem:dense_orbit_support}
Let $f:X\to X$ be a continuous map on a compact metric space, and let $\mu$ be an invariant ergodic probability measure. Then there exists a point $\omega \in \supp(\mu)$ such that
\[
\overline{\mathcal O_f^+(\omega)}=\supp(\mu),
\]
where
\[
\mathcal O_f^+(\omega)=\{\omega,f(\omega),f^2(\omega),\dots\}.
\]
In particular, the restriction $f|_{\supp(\mu)}$ is topologically transitive.
\end{lemma}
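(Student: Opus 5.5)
The plan is to show that $\mu$-almost every point of $X$ has a dense positive orbit in $S:=\supp(\mu)$, and then pick one such point. First note that $S$ is compact and nonempty: every point off $S$ has a neighborhood of measure zero, so by second countability $\mu(X\setminus S)=0$ and $\mu(S)=1$. Since $X$ is a compact metric space, $S$ is second countable. Fix a countable base $\{U_k\}_{k\in\mathbb N}$ of the topology of $X$ and keep only the indices $k$ with $U_k\cap S\neq\emptyset$. For each such $k$ we have $\mu(U_k)>0$, by the definition of support.

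For each retained $k$, set
\[
A_k=\bigcup_{n\ge 0} f^{-n}(U_k),
\]
the set of points whose forward orbit eventually enters $U_k$. It satisfies $f^{-1}(A_k)\subset A_k$, and
\[
\mu\bigl(f^{-1}(A_k)\bigr)=\mu(A_k)
\]
by invariance. Hence $A_k$ and $f^{-1}(A_k)$ differ by a null set. Consequently
\[
B_k=\bigcap_{m\ge 0} f^{-m}(A_k)
\]
is a strictly invariant set, $f^{-1}(B_k)=B_k$, with $\mu(B_k)=\mu(A_k)\ge \mu(U_k)>0$. Ergodicity then gives $\mu(B_k)=1$, and so $\mu(A_k)=1$.

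The main technical point is this passage from "invariant mod $0$" to strict invariance. The definition of ergodicity in the paper uses only sets with $f^{-1}(A)=A$, which is why the auxiliary set $B_k$ is introduced.

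Now let
\[
G=S\cap\bigcap_k A_k .
\]
This is a countable intersection of full-measure sets, so $\mu(G)=1$ and in particular $G\neq\emptyset$. Choose $\omega\in G$. Then $\omega\in S$. Since $S$ is closed and $f$-invariant, we have $f(S)\subset S$: the set $f^{-1}(S)$ is closed with full measure, hence contains $S$. It follows that $\overline{\mathcal O_f^+(\omega)}\subset S$.

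For the reverse inclusion, take any $y\in S$ and any neighborhood $V$ of $y$. Some basic set satisfies $y\in U_k\subset V$, and $U_k$ meets $S$, so $k$ is a retained index. Since $\omega\in A_k$, some iterate $f^n(\omega)$ lies in $U_k\subset V$. Therefore $y\in\overline{\mathcal O_f^+(\omega)}$, which gives equality.

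Finally, for transitivity of $f|_S$, let $W_1,W_2$ be nonempty relatively open subsets of $S$. The orbit of $\omega$ visits $W_1$ at some time $n_1$. Because $\omega$ lies in every $A_k$, its orbit meets each basic set meeting $S$ infinitely often: $f^{j}(\omega)$ also lies in $G$ for each $j$, since $\mu$-a.e. arguments apply to $f^j(\omega)$ by invariance, or alternatively one intersects over all $j$ the full-measure sets $f^{-j}(\bigcap_k A_k)$ from the start. So the orbit visits $W_2$ at some time $n_2>n_1$. Then $f^{n_2-n_1}(W_1)\cap W_2\neq\emptyset$, as required.
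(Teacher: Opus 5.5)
Your argument is correct, but it reaches the key full-measure statement by a different route from the paper. The paper takes a countable base $\{U_n\}$ of $\supp(\mu)$ and applies Birkhoff's ergodic theorem to $\mathbf 1_{U_n}$. The time averages converge almost everywhere to $\mu(U_n)>0$, so almost every orbit enters every $U_n$, and $\omega$ is chosen in the intersection of these full-measure sets with $\supp(\mu)$.

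You avoid the ergodic theorem entirely. The hitting set $A_k=\bigcup_{n\ge 0}f^{-n}(U_k)$ satisfies $f^{-1}(A_k)\subset A_k$. The decreasing intersection $B_k$ is then strictly invariant with $\mu(B_k)=\mu(A_k)\ge\mu(U_k)>0$, and ergodicity in the paper's strict sense forces $\mu(B_k)=1$. This is essentially a Poincar\'e-recurrence argument that uses only invariance and the definition of ergodicity. It also handles explicitly the gap between invariance mod $0$ and the strict invariance $f^{-1}(A)=A$ that the paper's definition requires. Birkhoff gives more (positive visit frequencies), but that is not needed here.

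Your write-up is also more complete than the paper's proof at two points:
\begin{itemize}
\item You prove $\mu(\supp(\mu))=1$ and $f(\supp(\mu))\subset\supp(\mu)$. The latter is exactly what gives $\overline{\mathcal O_f^+(\omega)}\subset\supp(\mu)$, which the paper asserts without justification.
\item You prove transitivity with a positive time $n_2-n_1$, where the paper simply calls it immediate.
\end{itemize}

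One small correction in that last step. Your first justification, that $f^j(\omega)\in G$ ``by invariance'', fails for an arbitrary $\omega\in G$. The inclusion $f^{-1}(A_k)\subset A_k$ passes membership in $A_k$ backward along orbits, not forward: a point may enter $U_k$ once and never return. Your alternative justification is valid. Simplest of all, choose $\omega\in S\cap\bigcap_k B_k$ from the start. $B_k$ is precisely the set of points whose forward orbits visit $U_k$ infinitely often, and you have already shown $\mu(B_k)=1$.
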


\begin{proof}
Let $\{U_n\}_{n\in\mathbb N}$ be a countable basis of relatively open subsets of $\supp(\mu)$. Since each $U_n$ is a nonempty relatively open subset of $\supp(\mu)$, one has
\[
\mu(U_n)>0
\qquad\text{for every }n\in\mathbb N.
\]

By Birkhoff's ergodic theorem applied to the indicator function $\mathbf 1_{U_n}$, for each $n\in\mathbb N$ there exists a set $A_n\subset X$ with $\mu(A_n)=1$ such that, for every $x\in A_n$,
\[
\lim_{N\to\infty}\frac1N\sum_{j=0}^{N-1}\mathbf 1_{U_n}(f^j(x))
=
\int \mathbf 1_{U_n}\,d\mu
=
\mu(U_n)>0.
\]
Consequently, for every $x\in A_n$ there exists some integer $k\ge 0$ such that
$f^k(x)\in U_n$. In fact, the positive orbit of $x$ enters $U_n$ infinitely many times.

Define $A=\bigcap_{n\in\mathbb N}A_n$. Then $\mu(A)=1$. Since moreover $\mu(\supp(\mu))=1$, there exists
\[
\omega\in A\cap \supp(\mu).
\]

Now let $U\subset \supp(\mu)$ be any nonempty relatively open set. Then there exists $n\in\mathbb N$ such that
\[
U_n\subset U
\qquad\text{and}\qquad
U_n\neq\emptyset.
\]
Since $\omega\in A_n$, there exists $k\ge 0$ such that $f^k(\omega)\in U_n\subset U$.
Therefore, the positive orbit of $\omega$ intersects every nonempty relatively open subset of $\supp(\mu)$. This implies that
\[
\overline{\mathcal O_f^+(\omega)}=\supp(\mu).
\]

The last assertion is immediate, since the existence of a dense positive orbit in $\supp(\mu)$ means precisely that $f|_{\supp(\mu)}$ is topologically transitive.
\end{proof}

\section{Approximation by zero-entropy systems}

In this section we prove the main result of the paper. The goal is to show that the existence of an invariant probability measure with full support makes it possible to construct, at every prescribed scale, a finite dynamical system that approximates the original homeomorphism in the $C^0$-Gromov--Hausdorff topology. Since every dynamics defined on a finite space has zero topological entropy, the desired approximation follows.

\begin{theorem}\label{thm:main}
Let $f:X\to X$ be a homeomorphism of a compact metric space. If $f$ admits an invariant probability measure with full support, then for every $\delta>0$ there exist a compact metric space $Y_\delta$ and a homeomorphism $g_\delta:Y_\delta\to Y_\delta$ such that
\[
d_{GH^0}(f,g_\delta)<\delta
\qquad\text{and}\qquad
h_{\topo}(g_\delta)=0.
\]
\end{theorem}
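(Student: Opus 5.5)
We will build $Y_\delta$ as a finite subset of $X$ with the restricted metric and $g_\delta$ a permutation of it. On a finite discrete space every bijection is a homeomorphism, and Lemma~\ref{lem:finite_entropy_zero} gives $h_{\topo}(g_\delta)=0$. So the whole task is to produce a finite set $Y\subset X$ and a bijection $g:Y\to Y$ with $d_{GH^0}(f,g)<\delta$.

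First fix $\varepsilon>0$, small compared with $\delta$. By uniform continuity of $f$, choose $\eta<\varepsilon$ such that $d(x,y)<\eta$ implies $d(f(x),f(y))<\varepsilon$. Cover $X$ by finitely many balls $B(x_1,\eta/2),\dots,B(x_m,\eta/2)$; each has positive $\mu$-measure because $\mu$ has full support.

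By Theorem~\ref{thm:ergodic_decomposition}(iv), $\mu(B_k)=\int\mu_P(B_k)\,d\hat\mu>0$, where $B_k=B(x_k,\eta/2)$. Hence for each $k$ there is an ergodic component $\mu_{P_k}$ with $\mu_{P_k}(B_k)>0$. Such a component is ergodic and satisfies (i), (iii), since these hold for $\hat\mu$-a.e.\ $P$ and the set of $P$ with $\mu_P(B_k)>0$ has positive $\hat\mu$-measure. By Lemma~\ref{lem:dense_orbit_support}, pick $\omega_k\in\supp(\mu_{P_k})$ with dense positive orbit in that support. Since $\supp\mu_{P_k}\cap \overline{B_k}\neq\emptyset$, some iterate of $\omega_k$ lies within $\eta$ of $x_k$.

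The key recurrence step is this. Because $\omega_k$ is in the support and its orbit is dense there, it is recurrent: there are arbitrarily large $n_k$ with $d(f^{n_k}(\omega_k),\omega_k)<\eta$. Choose $n_k$ large enough that the segment $\{\omega_k,\dots,f^{n_k-1}(\omega_k)\}$ passes within $\eta$ of $x_k$.

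Let $Y$ be the union of these segments, and let $g$ act as $f$ along each segment. The last point $f^{n_k-1}(\omega_k)$ is sent back to $\omega_k$, which is $\varepsilon$-close to $f(f^{n_k-1}\omega_k)=f^{n_k}\omega_k$ by the choice of $\eta$.

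The cycles may overlap as sets. To make $g$ well defined we treat $Y$ as a disjoint union of cycles. We metrize it by the pullback of $d$, with distinct copies of the same point placed at a tiny positive distance. Alternatively, we truncate a cycle at its first hit of an earlier point, which still gives a permutation. This adjusts distances by an arbitrarily small amount.

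Now define the $\Delta$-isometries. Let $i:X\to Y$ send $x$ to a point of $Y$ within $2\eta$ of $x$, which exists by the covering. Let $j:Y\to X$ be the inclusion. Then $j$ is an $\varepsilon$-isometry, and $d_{C^0}(j\circ g,f\circ j)<\varepsilon$ by the closing estimate. For $i$, the distortion is at most $4\eta$, and $i(X)=Y$ up to the construction. For the conjugacy defect, $d(g(i(x)),i(f(x)))\le d(g(i x),f(i x))+d(f(i x),f(x))+d(f(x),i(f x))<\varepsilon+\varepsilon+2\eta$.

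Taking $\varepsilon<\delta/4$ yields $d_{GH^0}(f,g)<\delta$.

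The main obstacle is recurrence and closing: guaranteeing that $\omega_k$ returns close to itself after a segment long enough to visit $B_k$. I would handle this as follows. The set of points of $\supp\mu_{P_k}$ with dense positive orbit is a dense $G_\delta$ (or at least the $\mu_{P_k}$-a.e.\ point is generic), and Poincaré recurrence gives $\mu_{P_k}$-a.e.\ recurrence. So we choose $\omega_k$ in the full-measure intersection rather than an arbitrary point from Lemma~\ref{lem:dense_orbit_support}. A secondary technical point is making $g$ a bijection when the cycles intersect, which the disjoint-copy metric resolves.
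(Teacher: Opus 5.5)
Your proposal is correct and is essentially the paper's proof. Both arguments use ergodic components charging the balls of a fine cover, a point with dense forward orbit in each component's support, and orbit segments closed into cycles. The cycles are assembled into a finite disjoint union metrized by the pulled-back distance plus a small discrete term, and the two almost-isometries are the projection to $X$ and a choice of a nearby cycle point for each $x\in X$.

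The only real variation is the closing step. You invoke recurrence of $\omega_k$, whereas the paper avoids it: it starts at $a_i=f^{\ell_i}(z_i)\in B(x_i,\alpha)$ and uses density of $\mathcal O_f^+(a_i)$ near $f^{-1}(a_i)$ to get $d(f^{N_i+1}(a_i),a_i)<\delta/3$.

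Two small fixes are needed. First, choose your modulus of continuity at scale $2\eta$ rather than $\eta$, since $i(x)$ is only $2\eta$-close to $x$. Second, drop the ``truncation'' alternative: as stated it breaks either injectivity of $g$ or the closing estimate.
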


\begin{proof}
Let $\mu$ be an $f$-invariant probability measure with full support. Fix $\delta>0$. Since $f$ is uniformly continuous and $X$ is compact, there exists $\beta>0$ such that
\begin{eqnarray}\label{ecu0}
d(x,y)<\beta
\quad\Longrightarrow\quad
d\bigl(f(x),f(y)\bigr)<\frac{\delta}{3}
\qquad\text{for all }x,y\in X.
\end{eqnarray}
Take $0<\alpha<\min \left\{\frac{\delta}{9},\frac{\beta}{2}\right\}$. By compactness of $X$, there exists a finite set $\{x_1,\dots,x_r\}\subset X$ such that
\[
d_H(\{x_1,\dots,x_r\},X)<\alpha.
\]
Let $\{\mu_P\}_{P\in\mathcal P}$ be the ergodic decomposition of $\mu$ as in Theorem \ref{thm:ergodic_decomposition}. Since $\supp(\mu)=X$, one has
\[
\mu\bigl(B(x_i,\alpha)\bigr)>0
\qquad\text{for every } i=1,\dots,r.
\]
Applying Theorem \ref{thm:ergodic_decomposition}, for each $i\in\{1,\dots,r\}$ we obtain
\[
\mu\bigl(B(x_i,\alpha)\bigr)
=
\int \mu_P\bigl(B(x_i,\alpha)\bigr)\,d\hat\mu(P)>0.
\]
Since $\mu_P$ is ergodic for $\hat\mu$-almost every class $P\in\mathcal P$, there exists a class $P_i\in\mathcal P$ such that
\begin{eqnarray}\label{ecu1}
\mu_{P_i}\bigl(B(x_i,\alpha)\bigr)>0,
\end{eqnarray}
and $\mu_{P_i}$ is ergodic. Denote this measure by $\mu_i$. From \eqref{ecu1}, for every $i=1,\ldots,r$, one has
\begin{eqnarray}\label{ecu2}
B(x_i,\alpha)\cap \supp(\mu_i)\neq\emptyset.
\end{eqnarray}

By Lemma \ref{lem:dense_orbit_support}, for each $i$ there exists a point $z_i\in \supp(\mu_i)$ such that
\[
\overline{\mathcal O_f^+(z_i)}=\supp(\mu_i).
\]
Since $B(x_i,\alpha)\cap \supp(\mu_i)\neq\emptyset$ and the positive orbit of $z_i$ is dense in $\supp(\mu_i)$, there exists an integer $\ell_i\ge0$ such that
\[
f^{\ell_i}(z_i)\in B(x_i,\alpha).
\]
Define
\[
a_i:=f^{\ell_i}(z_i).
\]
Since $f^{\ell_i}$ is a homeomorphism and $\supp(\mu_i)$ is $f$-invariant, one has
\begin{equation}\label{ecu3}
\overline{\mathcal O_f^+(a_i)}
=
\overline{f^{\ell_i}\bigl(\mathcal O_f^+(z_i)\bigr)}
=
f^{\ell_i}\bigl(\overline{\mathcal O_f^+(z_i)}\bigr)
=
f^{\ell_i}\bigl(\supp(\mu_i)\bigr)
=
\supp(\mu_i).
\end{equation}
In particular,
\[
a_i\in \supp(\mu_i)\cap B(x_i,\alpha).
\]
Since $\supp(\mu_i)$ is $f$-invariant and $a_i\in \supp(\mu_i)$, one has
\[
f^{-1}(a_i)\in \supp(\mu_i).
\]
By \eqref{ecu3}, since $f^{-1}(a_i)\in \supp(\mu_i)$, there exists an integer $N_i\ge0$ such that
\[
d\bigl(f^{N_i}(a_i),f^{-1}(a_i)\bigr)<\beta.
\]
Applying $f$ and using the choice of $\beta$ in \eqref{ecu0}, we obtain
\[
d\bigl(f^{N_i+1}(a_i),a_i\bigr)
=
d\bigl(f(f^{N_i}(a_i)),f(f^{-1}(a_i))\bigr)
<
\frac{\delta}{3}.
\]

We now define the finite set
\[
Y_\delta=\{(i,k):\,1\le i\le r,\ 0\le k\le N_i\}.
\]
On $Y_\delta$ we consider the map
\[
q:Y_\delta\to X,
\qquad
q(i,k)=f^k(a_i).
\]

To avoid possible identifications between distinct pairs $(i,k)$ and $(j,\ell)$ having the same image in $X$, we equip $Y_\delta$ with the metric
\[
\rho(u,v)=d\bigl(q(u),q(v)\bigr)+\alpha\,\eta(u,v),
\qquad u,v\in Y_\delta,
\]
where
\[
\eta(u,v)=
\begin{cases}
0,& u=v,\\
1,& u\neq v.
\end{cases}
\]
Since $d(q(u),q(v))$ defines a pseudometric on $Y_\delta$ and $\alpha\,\eta(u,v)$ is a metric, the function $\rho$ is a metric on $Y_\delta$. In particular, $(Y_\delta,\rho)$ is a compact metric space.

We now define $g_\delta:Y_\delta\to Y_\delta$ by
\[
g_\delta(i,k)=(i,k+1), \qquad 0\le k\le N_i-1,
\]
and
\[
g_\delta(i,N_i)=(i,0).
\]
Clearly, $g_\delta$ is a permutation of $Y_\delta$; since $Y_\delta$ is finite, it follows that $g_\delta$ is a homeomorphism. Moreover, for every $u=(i,k)\in Y_\delta$ one has
\begin{eqnarray}\label{ecu1.2}
d\bigl(q(g_\delta(u)),f(q(u))\bigr)<\frac{\delta}{3}.
\end{eqnarray}
Indeed, if $0\le k\le N_i-1$, then
\[
q(g_\delta(i,k))=q(i,k+1)=f^{k+1}(a_i)=f(q(i,k)),
\]
whereas if $k=N_i$, then
\[
d\bigl(q(g_\delta(i,N_i)),f(q(i,N_i))\bigr)
=
d\bigl(a_i,f^{N_i+1}(a_i)\bigr)
<
\frac{\delta}{3}.
\]
On the other hand, since $\{a_1,\dots,a_r\}\subset q(Y_\delta)$, we have
\[
d_H(q(Y_\delta),X)\le d_H(\{a_1,\dots,a_r\},X).
\]
Moreover,
\[
d_H(\{a_1,\dots,a_r\},X)
\le
 d_H(\{a_1,\dots,a_r\},\{x_1,\dots,x_r\})
+
d_H(\{x_1,\dots,x_r\},X)
<
2\alpha,
\]
since $a_i\in B(x_i,\alpha)$ for every $i$. Therefore,
\begin{equation}\label{ecu6}
d_H(q(Y_\delta),X)<2\alpha<\delta.
\end{equation}

Furthermore, if $u\neq v$, then
\[
\rho(u,v)=d\bigl(q(u),q(v)\bigr)+\alpha.
\]
Therefore, for every $u,v\in Y_\delta$ one has $\bigl|d\bigl(q(u),q(v)\bigr)-\rho(u,v)\bigr|\le \alpha$. 
Hence,
\begin{eqnarray}\label{ecu7}
\sup_{u,v\in Y_\delta}
\bigl|d\bigl(q(u),q(v)\bigr)-\rho(u,v)\bigr|
\le \alpha<\delta.
\end{eqnarray}
From \eqref{ecu6} and \eqref{ecu7} we conclude that
\[
q:(Y_\delta,\rho)\to X
\]
is a $\delta$-isometry. Moreover,
\[
d_{C^0}(q\circ g_\delta,f\circ q)<\frac{\delta}{3}<\delta.
\]

Since $d_H(q(Y_\delta),X)<2\alpha$, we construct a map $j:X\to Y_\delta$ as follows: for each $x\in X$, choose a point $j(x)\in Y_\delta$ such that
\begin{eqnarray}\label{ecu6.1}
d\bigl(x,q(j(x))\bigr)<2\alpha.
\end{eqnarray}
We claim that $j$ is a $\delta$-isometry. Given $x,x'\in X$, one has
\[
\bigl|d\bigl(q(j(x)),q(j(x'))\bigr)-d(x,x')\bigr|
\le
 d\bigl(x,q(j(x))\bigr)+d\bigl(x',q(j(x'))\bigr)
<
4\alpha.
\]
Moreover, by the definition of $\rho$,
\[
\bigl|\rho(j(x),j(x'))-d\bigl(q(j(x)),q(j(x'))\bigr)\bigr|
\le \alpha.
\]
Therefore,
\begin{eqnarray}\label{ecu8}
\bigl|\rho(j(x),j(x'))-d(x,x')\bigr|
<
5\alpha
<
\delta,
\end{eqnarray}
since $\alpha<\delta/9$. On the other hand, for every $y\in Y_\delta$,
\[
\rho\bigl(j(q(y)),y\bigr)
\le
 d\bigl(q(j(q(y))),q(y)\bigr)+\alpha
<
3\alpha.
\]
Therefore,
\begin{eqnarray}\label{ecu9}
d_H\bigl(j(X),Y_\delta\bigr)<3\alpha<\delta.
\end{eqnarray}
From \eqref{ecu8} and \eqref{ecu9}, we conclude that $j:X\to (Y_\delta,\rho)$
is a $\delta$-isometry.
\vspace{0.3cm}

We now verify that $d_{C^0}(j\circ f,g_\delta\circ j)<\delta$. Indeed, let $x\in X$. By the definition of $\rho$, one has
\[
\rho\bigl(j(f(x)),g_\delta(j(x))\bigr)
\le
 d\bigl(q(j(f(x))),q(g_\delta(j(x)))\bigr)+\alpha.
\]
Moreover,
\[
\begin{aligned}
d\bigl(q(j(f(x))),q(g_\delta(j(x)))\bigr)
&\le
 d\bigl(q(j(f(x))),f(x)\bigr)
+d\bigl(f(x),f(q(j(x)))\bigr)\\
&\qquad
+d\bigl(f(q(j(x))),q(g_\delta(j(x)))\bigr).
\end{aligned}
\]
By the definition of $j$ given in \eqref{ecu6.1}, the first term on the right-hand side is smaller than $2\alpha$. For the second term, since
\[
d\bigl(x,q(j(x))\bigr)<2\alpha<\beta,
\]
the choice of $\beta$ in \eqref{ecu0} implies that $d\bigl(f(x),f(q(j(x)))\bigr)<\frac{\delta}{3}$.
Finally, by \eqref{ecu1.2}, the third term is smaller than $\frac{\delta}{3}$. Consequently,
\[
\rho\bigl(j(f(x)),g_\delta(j(x))\bigr)
<
2\alpha+\frac{\delta}{3}+\frac{\delta}{3}+\alpha
=
3\alpha+\frac{2\delta}{3}
<
3\left(\frac{\delta}{9}\right)+\frac{2\delta}{3}
=
\delta.
\]
Therefore,
\[
d_{C^0}(j\circ f,g_\delta\circ j)<\delta.
\]

We have constructed $\delta$-isometries
\[
q:(Y_\delta,\rho)\to X
\qquad\text{and}\qquad
j:X\to (Y_\delta,\rho)
\]
such that
\[
d_{C^0}(q\circ g_\delta,f\circ q)<\delta
\qquad\text{and}\qquad
d_{C^0}(j\circ f,g_\delta\circ j)<\delta.
\]
By the definition of the distance $d_{GH^0}$, it follows that
\[
d_{GH^0}(f,g_\delta)<\delta.
\]

Finally, since $Y_\delta$ is finite, Lemma \ref{lem:finite_entropy_zero} implies that
\[
h_{\topo}(g_\delta)=0.
\]
This completes the proof.
\end{proof}

The previous construction also has an additional property that will be useful in the next section.

\begin{proposition}\label{prop:periodic_version}
Under the assumptions of Theorem \ref{thm:main}, the approximating homeomorphism $g_\delta$ may be chosen in such a way that every point of $Y_\delta$ is periodic.
\end{proposition}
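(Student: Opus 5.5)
The plan is to observe that the map $g_\delta$ built in the proof of Theorem \ref{thm:main} already has the required property, so no new construction is needed; the work consists only in checking that the cyclic structure of $g_\delta$ forces every point to be periodic. I will reuse, without change, the choices of $\beta$, $\alpha$, the points $x_i$, the ergodic components $\mu_i$, the points $a_i\in\supp(\mu_i)\cap B(x_i,\alpha)$, the integers $N_i$, the finite set $Y_\delta=\{(i,k):1\le i\le r,\ 0\le k\le N_i\}$ with the metric $\rho$, and the $\delta$-isometries $q$ and $j$. All the estimates giving $d_{GH^0}(f,g_\delta)<\delta$ and $h_{\topo}(g_\delta)=0$ then hold exactly as before.

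The key step is to show directly from the definition that each $u=(i,k)\in Y_\delta$ is periodic. By construction, $g_\delta$ acts on the $i$-th block $C_i=\{(i,0),\dots,(i,N_i)\}$ as the cyclic shift $k\mapsto k+1 \pmod{N_i+1}$. By induction on $m$ one gets
\[
g_\delta^m(i,k)=\bigl(i,\,(k+m)\bmod (N_i+1)\bigr),
\]
and hence $g_\delta^{N_i+1}(i,k)=(i,k)$. So every point of $C_i$ is periodic with period dividing $N_i+1$. Since $Y_\delta=\bigcup_{i=1}^r C_i$ is a disjoint union, every point of $Y_\delta$ is periodic. Equivalently, $g_\delta^{L}=\mathrm{id}_{Y_\delta}$ with $L=\mathrm{lcm}(N_1+1,\dots,N_r+1)$.

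For a slicker alternative, note that $g_\delta$ is a permutation of a finite set, and every bijection of a finite set has all points periodic, since the orbit of each point is finite and injectivity rules out preperiodic tails.

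I do not expect a genuine obstacle. The only point needing care is that the pairs $(i,k)$ are formal labels, so distinct pairs are distinct points of $Y_\delta$ even when $q(i,k)=q(j,\ell)$. Hence periodicity is a statement about $g_\delta$ on labels and does not depend on any coincidences among the points $f^k(a_i)$ in $X$. It is also worth remarking that the closing estimate $d(f^{N_i+1}(a_i),a_i)<\delta/3$ is exactly what makes each $q(C_i)$ a periodic $\delta/3$-pseudo-orbit, which is how this proposition will feed into the density of periodic points in Section 4.
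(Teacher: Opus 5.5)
Your proposal is correct and matches the paper's proof: the paper also notes that the construction from Theorem \ref{thm:main} already works, since each block $\{(i,0),\dots,(i,N_i)\}$ is $g_\delta$-invariant and $g_\delta$ acts on it as a single cycle, so every point of $Y_\delta$ is periodic. One small aside: your closing remark slightly misdescribes Section 4, which uses the proposition through the semi-conjugacy $h$ given by $GH$-stability (via $f^m(h(y))=h(g_\eta^m(y))=h(y)$), not through a pseudo-orbit argument; this does not affect your proof.
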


\begin{proof}
The construction carried out in the proof of Theorem \ref{thm:main} already has this property. Indeed, for each $i=1,\dots,r$, the subset
\[
\{(i,0),(i,1),\dots,(i,N_i)\}
\]
is invariant under $g_\delta$, and the restriction of $g_\delta$ to this set is a finite periodic orbit. Therefore, every point of $Y_\delta$ is periodic for $g_\delta$.
\end{proof}

\begin{remark}\label{rem:single_cycle}
The previous construction yields an approximating homeomorphism whose phase space is a finite union of periodic orbits. In particular, every point of $Y_\delta$ is periodic. This property will be sufficient for the subsequent applications.
\end{remark}

\section{Consequences for topologically $GH$-stable systems}

In this section we analyze the dynamical consequences of the approximation constructed in the previous section when the additional hypothesis of $GH$ topological stability is imposed. The central idea is that if a periodic system is sufficiently close, in the $C^0$-Gromov--Hausdorff topology, to a topologically $GH$-stable homeomorphism, then the semi-conjugacy provided by stability sends periodic points of the approximating system to periodic points of the original one. By combining this principle with the approximation by finite systems obtained in Theorem \ref{thm:main}, one deduces the density of periodic points.

\subsection{$GH$ topological stability}

We recall the notion of $GH$ topological stability introduced by Arbieto and Morales in \cite{arbieto2017topological}. This definition extends the classical notion of topological stability to the context in which the perturbed dynamics may be defined on different compact metric spaces.

\begin{definition}
Let $f:X\to X$ be a homeomorphism of a compact metric space. We say that $f$ is \emph{topologically $GH$-stable} if for every $\varepsilon>0$ there exists $\eta>0$ such that, for every homeomorphism $g:Y\to Y$ of a compact metric space satisfying
\[
d_{GH^0}(f,g)<\eta,
\]
there exists a continuous $\varepsilon$-isometry $h:Y\to X$ such that
\[
f\circ h=h\circ g.
\]
\end{definition}

The map $h$ appearing in the previous definition provides a semi-conjugacy between $g$ and $f$, and will be the fundamental tool for transferring periodicity from the approximating systems to the original system.

\subsection{Density of periodic points}

We are now in a position to obtain the main application of the present work. Theorem \ref{thm:main}, together with Proposition \ref{prop:periodic_version}, produces periodic systems arbitrarily close to $f$ in the $GH^0$ topology. The $GH$ topological stability then allows one to transfer periodicity from these approximating systems to the original dynamics.

\begin{theorem}\label{thm:denseperiodic}
Let $f:X\to X$ be a topologically $GH$-stable homeomorphism of a compact metric space. If $f$ admits an invariant probability measure with full support, then
\[
\overline{\Per(f)}=X.
\]
\end{theorem}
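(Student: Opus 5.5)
We fix $x\in X$ and $\varepsilon>0$, and aim to find a periodic point of $f$ in $B(x,2\varepsilon)$. By $GH$ topological stability, there is $\eta>0$ attached to this $\varepsilon$. Then Theorem \ref{thm:main} with $\delta<\min\{\eta,\varepsilon\}$, together with Proposition \ref{prop:periodic_version}, gives a finite space $(Y_\delta,\rho)$ and a homeomorphism $g_\delta$ with $d_{GH^0}(f,g_\delta)<\delta<\eta$, and every point of $Y_\delta$ is $g_\delta$-periodic. Since $Y_\delta$ is finite, $g_\delta$ is indeed a homeomorphism of a compact metric space, so stability applies. It yields a continuous $\varepsilon$-isometry $h:Y_\delta\to X$ with $f\circ h=h\circ g_\delta$.

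Next we transfer periodicity. If $y\in Y_\delta$ satisfies $g_\delta^p(y)=y$, then $f^p(h(y))=h(g_\delta^p(y))=h(y)$. Hence $h(Y_\delta)\subset \Per(f)$. This step needs no surjectivity or injectivity of $h$, only the semi-conjugacy relation.

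It remains to locate a point of $h(Y_\delta)$ near $x$. Since $h$ is an $\varepsilon$-isometry, $d_H(h(Y_\delta),X)<\varepsilon$, so there is $y\in Y_\delta$ with $d(h(y),x)<\varepsilon$. Thus $\Per(f)\cap B(x,\varepsilon)\neq\emptyset$. As $x$ and $\varepsilon$ are arbitrary, $\overline{\Per(f)}=X$.

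I expect no serious obstacle. The only point requiring care is the density statement, which comes from the Hausdorff clause in the definition of $\varepsilon$-isometry and not from any closeness of $h$ to the map $q$ built in the proof of Theorem \ref{thm:main}. One should also check that the definition of stability quantifies over all compact metric spaces $Y$, finite ones included, so that $g_\delta$ is an admissible perturbation. The definition in this section does so.
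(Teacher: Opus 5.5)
Your proposal is correct and is essentially the paper's proof. You take $\eta$ from $GH$-stability at scale $\varepsilon$, apply the all-periodic finite approximation of Theorem~\ref{thm:main} and Proposition~\ref{prop:periodic_version} below $\eta$, push periodicity through $f\circ h=h\circ g$ to get $h(Y)\subset\Per(f)$, and conclude from the Hausdorff clause $d_H(h(Y),X)<\varepsilon$ of the $\varepsilon$-isometry (the target ball $B(x,2\varepsilon)$ and the constraint $\delta<\varepsilon$ are unnecessary but harmless, since you land in $B(x,\varepsilon)$ anyway).
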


\begin{proof}
Let $\varepsilon>0$. Since $f$ is topologically $GH$-stable, there exists $\eta>0$ such that for every homeomorphism $g:Y\to Y$ with
\[
d_{GH^0}(f,g)<\eta,
\]
there exists a continuous $\varepsilon$-isometry
\[
h:Y\to X
\]
satisfying
\[
f\circ h=h\circ g.
\]

We now apply Proposition \ref{prop:periodic_version} at scale $\eta$. We obtain a compact metric space $Y_\eta$ and a homeomorphism
\[
g_\eta:Y_\eta\to Y_\eta
\]
such that
\[
d_{GH^0}(f,g_\eta)<\eta
\]
and, moreover, every point of $Y_\eta$ is periodic for $g_\eta$.

By the $GH$ topological stability of $f$, there then exists a continuous $\varepsilon$-isometry
\[
h:Y_\eta\to X
\]
such that
\[
f\circ h=h\circ g_\eta.
\]

Let $y\in Y_\eta$. Since $y$ is periodic for $g_\eta$, there exists an integer $m\ge 1$ such that
\[
g_\eta^m(y)=y.
\]
Applying $h$ and using the semi-conjugacy relation, we obtain
\[
f^m(h(y))
=
h(g_\eta^m(y))
=
h(y).
\]
Therefore,
\[
h(y)\in \Per(f).
\]
Since $y\in Y_\eta$ was arbitrary, we conclude that
\[
h(Y_\eta)\subset \Per(f).
\]

On the other hand, since $h$ is an $\varepsilon$-isometry, in particular one has
\[
d_H(h(Y_\eta),X)<\varepsilon.
\]
It follows that for every $x\in X$ there exists $p\in h(Y_\eta)\subset \Per(f)$ such that
\[
d(x,p)<\varepsilon.
\]
Since $\varepsilon>0$ is arbitrary, we conclude that
\[
\overline{\Per(f)}=X.
\]
\end{proof}

The previous theorem shows that the combination of $GH$ topological stability with the existence of an invariant measure having full support imposes a strong periodic structure on the dynamics. In particular, on infinite compact spaces, this condition is incompatible with minimality.

\subsection{A corollary for minimal homeomorphisms}

As an immediate consequence of Theorem \ref{thm:denseperiodic}, we obtain the following obstruction to $GH$ topological stability in the minimal setting.

\begin{corollary}\label{cor:minimal_not_GHstable}
Let $f:X\to X$ be a minimal homeomorphism of an infinite compact metric space. Then $f$ cannot be topologically $GH$-stable.
\end{corollary}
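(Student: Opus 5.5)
The plan is to argue by contradiction and apply Theorem \ref{thm:denseperiodic}. Suppose $f:X\to X$ is minimal on an infinite compact metric space $X$ and is topologically $GH$-stable. We need two facts: that $f$ carries an invariant probability measure with full support, and that $f$ has no periodic points. Together with Theorem \ref{thm:denseperiodic} these give the contradiction.

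First, the measure. By the Krylov--Bogolyubov theorem, $f$ admits an invariant Borel probability measure $\mu$. Its support $\supp(\mu)$ is nonempty, since $\mu(X)=1$, and it is closed. It is also $f$-invariant: for a homeomorphism preserving $\mu$, $\mu(f(U))=\mu(U)$ for every open $U$, so $f(\supp(\mu))=\supp(\mu)$. Minimality says that every nonempty closed invariant set equals $X$, so $\supp(\mu)=X$. Hence $f$ satisfies the hypotheses of Theorem \ref{thm:denseperiodic}, and we obtain $\overline{\Per(f)}=X$. In particular $\Per(f)\neq\emptyset$.

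Second, the periodic points. Let $p\in\Per(f)$ with $f^m(p)=p$. Its orbit $\{p,f(p),\dots,f^{m-1}(p)\}$ is finite, hence closed, and it is invariant. By minimality this orbit equals $X$, so $X$ is finite. This contradicts the assumption that $X$ is infinite, and therefore $f$ cannot be topologically $GH$-stable.

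The only step needing any care is checking that the support is invariant. I would handle it directly: if $x\in\supp(\mu)$ and $V$ is an open neighbourhood of $f(x)$, then $f^{-1}(V)$ is an open neighbourhood of $x$. So $\mu(V)=\mu(f^{-1}(V))>0$, which gives $f(\supp(\mu))\subset\supp(\mu)$. Applying the same argument to $f^{-1}$, which also preserves $\mu$, gives the reverse inclusion. Minimality is used here in its closed-invariant-set form, which for homeomorphisms is equivalent to every orbit being dense.
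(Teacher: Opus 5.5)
Your proof is correct and follows the paper's argument exactly. Krylov--Bogolyubov together with minimality gives an invariant measure with full support, Theorem \ref{thm:denseperiodic} then forces $\Per(f)\neq\emptyset$, and a finite periodic orbit contradicts minimality on an infinite space; your explicit check that $\supp(\mu)$ is $f$-invariant fills in a step the paper only asserts.
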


\begin{proof}
Every continuous homeomorphism of a compact metric space admits an invariant probability measure \cite{walters1982}. Let $\mu$ be one such measure. Since $\supp(\mu)$ is a nonempty compact $f$-invariant set, the minimality of $f$ necessarily implies that
\[
\supp(\mu)=X.
\]

Suppose now, by contradiction, that $f$ is topologically $GH$-stable. Then, by Theorem \ref{thm:denseperiodic},
\[
\overline{\Per(f)}=X.
\]
But a minimal homeomorphism on an infinite compact space cannot have periodic points. Indeed, if $p$ were periodic, then its orbit would be finite, closed, and invariant; by minimality, that orbit would have to coincide with the whole space $X$, contradicting the fact that $X$ is infinite. This contradiction proves that $f$ cannot be topologically $GH$-stable.
\end{proof}

\subsection{Invariant measures with full support from dense periodic points}

The following lemma will be useful for relating our results to the transitive case previously treated in \cite{cubas2018propriedades}. The statement is completely general and shows that the density of periodic points is sufficient to produce an invariant measure with full support.

\begin{lemma}\label{lem:denseperiodic_fullsupport}
Let $f:X\to X$ be a homeomorphism of a compact metric space. If
\[
\overline{\Per(f)}=X,
\]
then $f$ admits an invariant probability measure with full support.
\end{lemma}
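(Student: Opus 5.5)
We construct the measure explicitly as a countable convex combination of periodic orbit measures. Since $X$ is a compact metric space, it is separable, and so is the subspace $\Per(f)$. Because $\overline{\Per(f)}=X$, we can choose a countable set $\{p_n\}_{n\in\mathbb N}\subset \Per(f)$ that is dense in $X$. One way to do this is to take a countable basis $\{U_n\}$ of nonempty open sets of $X$ and pick $p_n\in U_n\cap \Per(f)$, which is nonempty by density.

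For each $n$, let $m_n\ge 1$ be the period of $p_n$. Define the probability measure equidistributed on its orbit,
\[
\nu_n=\frac{1}{m_n}\sum_{k=0}^{m_n-1}\delta_{f^k(p_n)}.
\]
Each $\nu_n$ is $f$-invariant. Indeed, for any Borel set $E$ we have $\nu_n(f^{-1}(E))=\frac1{m_n}\#\{k: f^{k+1}(p_n)\in E\}$, and since $f^{m_n}(p_n)=p_n$, the shifted index set runs over the same orbit. This gives $\nu_n(f^{-1}(E))=\nu_n(E)$. Now set
\[
\mu=\sum_{n\ge1}2^{-n}\nu_n .
\]
The weights sum to $1$, so $\mu$ is a Borel probability measure. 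It is $f$-invariant because invariance passes through countable sums by monotone convergence (or simply termwise, since the series of nonnegative terms converges for every $E$).

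It remains to check that $\mu$ has full support. Let $x\in X$ and let $U$ be an open neighborhood of $x$. Then $U$ contains some basis element $U_n$, hence contains $p_n$. Therefore
\[
\mu(U)\ge 2^{-n}\nu_n(U)\ge 2^{-n}\tfrac{1}{m_n}>0.
\]
So $x\in\supp(\mu)$, and thus $\supp(\mu)=X$.

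There is no serious obstacle in this argument. The only points needing care are the choice of a countable dense family of periodic points and the verification that the countable convex combination is still an invariant probability measure; both are routine. If $X$ is finite, the same construction works, with the sum simply being finite.
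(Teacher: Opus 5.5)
Your proposal is correct and follows essentially the same route as the paper: a dense sequence of periodic points, the associated periodic-orbit measures, and the convex combination $\sum_n 2^{-n}\nu_n$, with invariance checked termwise and full support coming from the density of the $p_n$. One small point concerns your remark on finite $X$: a finite sum with weights $2^{-n}$ would need renormalizing, but if you allow repetitions in the indexing of the basis the same infinite series works unchanged.
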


\begin{proof}
Since $X$ is compact metric, it is in particular separable. Because
\[
\overline{\Per(f)}=X,
\]
there exists a dense sequence
\[
\{p_n\}_{n\in\mathbb N}\subset \Per(f).
\]
For each $n\in\mathbb N$, let $\pi_n\ge 1$ denote the period of $p_n$, and define the periodic measure associated with its orbit by
\[
\mu_n
=
\frac{1}{\pi_n}\sum_{k=0}^{\pi_n-1}\delta_{f^k(p_n)}.
\]
Each $\mu_n$ is a Borel probability measure invariant under $f$.

Now consider the measure
\[
\mu=\sum_{n=1}^{\infty}2^{-n}\mu_n.
\]
Since
\[
\sum_{n=1}^{\infty}2^{-n}=1,
\]
the measure $\mu$ is a Borel probability measure. Moreover, by linearity of the pushforward operation and the invariance of each $\mu_n$, we obtain
\[
f_*\mu
=
\sum_{n=1}^{\infty}2^{-n}f_*\mu_n
=
\sum_{n=1}^{\infty}2^{-n}\mu_n
=
\mu.
\]
Therefore, $\mu$ is invariant under $f$.

Finally, let us show that $\supp(\mu)=X$. Let $U\subset X$ be a nonempty open set. Since the sequence $\{p_n\}_{n\in\mathbb N}$ is dense, there exists $n_0\in\mathbb N$ such that
\[
p_{n_0}\in U.
\]
Since $p_{n_0}$ belongs to the periodic orbit supporting $\mu_{n_0}$, it follows that
\[
\mu_{n_0}(U)>0.
\]
Consequently,
\[
\mu(U)\ge 2^{-n_0}\mu_{n_0}(U)>0.
\]
We have proved that every nonempty open set has positive measure with respect to $\mu$, which is equivalent to
\[
\supp(\mu)=X.
\]
\end{proof}

The usefulness of the previous lemma is that it converts density-of-periodic-points results into statements of ergodic character.

\begin{proposition}\label{prop:transitive_full_support_GH}
Let $f:X\to X$ be a topologically transitive and topologically $GH$-stable homeomorphism of a compact metric space. Then $f$ admits an invariant probability measure with full support.
\end{proposition}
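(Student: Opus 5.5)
The plan is to combine Lemma \ref{lem:denseperiodic_fullsupport} with the earlier result of \cite{cubas2018propriedades}. That result states that a topologically transitive, topologically $GH$-stable homeomorphism of a compact metric space has dense periodic points. Concretely, the proof has two steps.

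\textbf{Step 1.} From transitivity together with $GH$-stability, deduce $\overline{\Per(f)}=X$. Here I would invoke \cite{cubas2018propriedades} directly. For completeness I would also sketch why it holds, because it mirrors Theorem \ref{thm:denseperiodic}. Transitivity gives a point $\omega$ whose orbit is dense. Fix $\varepsilon>0$ and let $\eta$ be the constant from $GH$-stability. Choose a long orbit segment $\omega,f(\omega),\dots,f^N(\omega)$ that is $\alpha$-dense in $X$ and returns $\beta$-close to $f^{-1}(\omega)$. Then build the single cycle $g$ on $\{0,\dots,N\}$ with the metric $\rho$ exactly as in the proof of Theorem \ref{thm:main}; this gives $d_{GH^0}(f,g)<\eta$. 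The semiconjugating $\varepsilon$-isometry $h$ then sends this periodic orbit onto an $\varepsilon$-dense set of periodic points of $f$.

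\textbf{Step 2.} Apply Lemma \ref{lem:denseperiodic_fullsupport} to the homeomorphism $f$ with $\overline{\Per(f)}=X$. This yields an invariant probability measure with full support, namely a weighted sum of periodic-orbit measures along a dense sequence of periodic points.

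The main obstacle is Step 1, specifically getting the orbit segment to return close to its start. Transitivity gives a dense orbit, but one has to check that the dense orbit can be taken to be a positive one, since transitivity of a homeomorphism on a compact space without isolated points gives a dense forward orbit. If $X$ has isolated points, a transitive homeomorphism is a single finite periodic orbit, and then the conclusion is immediate. One also has to ensure $f^{-1}(\omega)$ is in the closure of the forward orbit of $\omega$, which follows from density of that orbit. Relying on the cited result from \cite{cubas2018propriedades} avoids these details, so I would present the proof as citing it and then applying Lemma \ref{lem:denseperiodic_fullsupport}.
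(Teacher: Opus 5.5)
Your proposal is correct and is exactly the paper's proof: invoke \cite[Theorem 3.1.4]{cubas2018propriedades} to get $\overline{\Per(f)}=X$, then apply Lemma~\ref{lem:denseperiodic_fullsupport}. One minor caution about your optional sketch: the claim that a transitive homeomorphism with an isolated point is a single periodic orbit holds when transitivity means a dense forward orbit, which is the notion the paper implicitly uses, but it fails if transitivity only means a dense two-sided orbit, e.g.\ $n\mapsto n+1$ on the two-point compactification $\mathbb{Z}\cup\{\pm\infty\}$, so it is good that your actual argument rests on the citation rather than on the sketch.
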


\begin{proof}
By \cite[Theorem 3.1.4]{cubas2018propriedades}, one has
\[
\overline{\Per(f)}=X.
\]
Applying Lemma \ref{lem:denseperiodic_fullsupport}, we conclude that $f$ admits an invariant probability measure with full support.
\end{proof}

The previous proposition shows that, within the class of transitive and topologically $GH$-stable homeomorphisms, the ergodic hypothesis of Theorem \ref{thm:main} is automatic.

\begin{corollary}\label{cor:transitive_GH_zero_entropy}
Let $f:X\to X$ be a topologically transitive and topologically $GH$-stable homeomorphism of a compact metric space. Then for every $\delta>0$ there exist a compact metric space $Y_\delta$ and a homeomorphism $g_\delta:Y_\delta\to Y_\delta$ such that
\[
d_{GH^0}(f,g_\delta)<\delta
\qquad\text{and}\qquad
h_{\topo}(g_\delta)=0.
\]
\end{corollary}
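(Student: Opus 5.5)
The corollary follows by chaining two results already established. The plan is to verify the ergodic hypothesis of Theorem \ref{thm:main} and then invoke that theorem. Let $f:X\to X$ be topologically transitive and topologically $GH$-stable. By Proposition \ref{prop:transitive_full_support_GH}, $f$ admits an invariant probability measure $\mu$ with $\supp(\mu)=X$. That proposition itself combines \cite[Theorem 3.1.4]{cubas2018propriedades}, which gives $\overline{\Per(f)}=X$, with Lemma \ref{lem:denseperiodic_fullsupport}, which builds $\mu=\sum_n 2^{-n}\mu_n$ from periodic orbit measures along a dense sequence of periodic points.

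Once $\mu$ is available, $f$ satisfies exactly the hypotheses of Theorem \ref{thm:main}. Fixing $\delta>0$, that theorem yields a compact metric space $Y_\delta$ and a homeomorphism $g_\delta:Y_\delta\to Y_\delta$ with $d_{GH^0}(f,g_\delta)<\delta$. In the construction, $Y_\delta$ is a finite set carrying the metric $\rho$, and $g_\delta$ is a permutation of it, so $h_{\topo}(g_\delta)=0$ by Lemma \ref{lem:finite_entropy_zero}. This is precisely the conclusion of the corollary.

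No step is a genuine obstacle; the only care needed is bookkeeping. Transitivity together with $GH$-stability enters solely through \cite[Theorem 3.1.4]{cubas2018propriedades}, and its hypotheses coincide with those assumed here: a homeomorphism of a compact metric space that is transitive and topologically $GH$-stable in the Arbieto--Morales sense. If one wanted to avoid that external citation, the argument could not simply substitute Theorem \ref{thm:denseperiodic}, since that theorem presupposes the full-support measure and the reasoning would be circular. The external density-of-periodic-points result is therefore the essential input, and everything else is a direct application of Theorem \ref{thm:main}.
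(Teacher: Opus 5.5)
Your proposal is correct and matches the paper's proof exactly: Proposition \ref{prop:transitive_full_support_GH} provides the invariant measure with full support, and Theorem \ref{thm:main} then yields the zero-entropy approximations. Your remark that Theorem \ref{thm:denseperiodic} cannot replace the external density-of-periodic-points result, since that would be circular, is also accurate.
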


\begin{proof}
By Proposition \ref{prop:transitive_full_support_GH}, the homeomorphism $f$ admits an invariant probability measure with full support. The result then follows immediately from Theorem \ref{thm:main}.
\end{proof}

\section{Ergodic interpretation and final remarks}

Theorem \ref{thm:main} shows that the existence of an invariant measure with full support is sufficient to construct finite approximations of the dynamics in the $C^0$-Gromov--Hausdorff topology. In this sense, the result may be interpreted as a principle of global discretization: although the system need not be transitive, the ergodic information distributed throughout the whole space makes it possible to produce finite approximating models.

From the entropy viewpoint, it also follows that topological entropy is not stable under $C^0$-Gromov--Hausdorff perturbations within the class under consideration, since every dynamics admitting an invariant measure with full support can be approximated by systems with zero topological entropy.

On the other hand, Theorem \ref{thm:denseperiodic} shows that, under $GH$ topological stability, this periodic approximation translates into density of periodic points for the original dynamics. This highlights a close relationship between stability, finite approximation, and periodic recurrence.

Finally, Proposition \ref{prop:transitive_full_support_GH} shows that, in the class of transitive and topologically $GH$-stable homeomorphisms, the hypothesis of the existence of an invariant measure with full support arises naturally. In this way, the ergodic approach developed here recovers and extends previous results obtained in the transitive setting.

We conclude with a few questions that arise naturally from this work.

\begin{question}
Can Theorem \ref{thm:main} be extended to the case of homeomorphisms admitting invariant measures whose support is not full?
\end{question}

\begin{question}
What can be said about the behavior of measures of maximal entropy under $C^0$-Gromov--Hausdorff perturbations?
\end{question}

\begin{question}
Is it possible to formulate a more directly ergodic version of a closing lemma in the $GH^0$ topology?
\end{question}

\bibliographystyle{abbrv}
\bibliography{references}

@article{arbieto2017topological,
  author  = {Arbieto, Alexander and Morales Rojas, C. A.},
  title   = {Topological Stability from {Gromov--Hausdorff} Viewpoint},
  journal = {Discrete and Continuous Dynamical Systems},
  volume  = {37},
  number  = {7},
  pages   = {3531--3544},
  year    = {2017},
  doi     = {10.3934/dcds.2017151}
}

@book{burago2001,
  author    = {Burago, Dmitri and Burago, Yuri and Ivanov, Sergei},
  title     = {A Course in Metric Geometry},
  series    = {Graduate Studies in Mathematics},
  volume    = {33},
  publisher = {American Mathematical Society},
  address   = {Providence, RI},
  year      = {2001},
  isbn      = {978-0-8218-2129-9}
}

@mastersthesis{cubas2018propriedades,
  author   = {Cubas Becerra, Richard Javier},
  title    = {Propriedades de um Homeomorfismo {GH} Est{\'a}vel},
  school   = {Universidade Federal de Uberl{\^a}ndia},
  address  = {Uberl{\^a}ndia},
  year     = {2018}
}

@book{gromov2007,
  author    = {Gromov, Mikhail},
  title     = {Metric Structures for Riemannian and Non-Riemannian Spaces},
  editor    = {Lafontaine, Jacques and Pansu, Pierre},
  series    = {Modern Birkh\"auser Classics},
  publisher = {Birkh\"auser},
  address   = {Boston},
  year      = {2007}
}

@article{jung2019closure,
  author  = {Jung, Woochul},
  title   = {The closure of periodic orbits in the {Gromov--Hausdorff} space},
  journal = {Topology and its Applications},
  volume  = {264},
  pages   = {493--497},
  year    = {2019},
  doi     = {10.1016/j.topol.2019.06.048}
}

@book{katokhasselblatt1995,
  author    = {Katok, Anatole and Hasselblatt, Boris},
  title     = {Introduction to the Modern Theory of Dynamical Systems},
  series    = {Encyclopedia of Mathematics and its Applications},
  volume    = {54},
  publisher = {Cambridge University Press},
  address   = {Cambridge},
  year      = {1995}
}

@book{lee2022gromov,
  author    = {Lee, Jihoon and Morales, Carlos},
  title     = {Gromov--Hausdorff Stability of Dynamical Systems and Applications to PDEs},
  series    = {SpringerBriefs in Mathematics},
  publisher = {Springer},
  address   = {Cham},
  year      = {2022}
}

@book{robinson1999,
  author    = {Robinson, Clark},
  title     = {Dynamical Systems: Stability, Symbolic Dynamics, and Chaos},
  edition   = {2},
  publisher = {CRC Press},
  address   = {Boca Raton, FL},
  year      = {1999}
}

@book{viana2016foundations,
  author    = {Viana, Marcelo and Oliveira, Krerley},
  title     = {Foundations of Ergodic Theory},
  series    = {Cambridge Studies in Advanced Mathematics},
  volume    = {151},
  publisher = {Cambridge University Press},
  address   = {Cambridge},
  year      = {2016}
}

@article{walters1970,
  author  = {Walters, Peter},
  title   = {Anosov diffeomorphisms are topologically stable},
  journal = {Topology},
  volume  = {9},
  number  = {1},
  pages   = {71--78},
  year    = {1970},
  doi     = {10.1016/0040-9383(70)90051-0}
}

@incollection{walters1978,
  author    = {Walters, Peter},
  title     = {On the pseudo-orbit tracing property and its relationship to stability},
  booktitle = {The Structure of Attractors in Dynamical Systems},
  series    = {Lecture Notes in Mathematics},
  volume    = {668},
  pages     = {231--244},
  publisher = {Springer},
  address   = {Berlin},
  year      = {1978}
}

@book{walters1982,
  author    = {Walters, Peter},
  title     = {An Introduction to Ergodic Theory},
  series    = {Graduate Texts in Mathematics},
  volume    = {79},
  publisher = {Springer-Verlag},
  address   = {New York},
  year      = {1982},
  isbn      = {978-0-387-95152-2}
}

\end{document}